 \newtheorem{thm}{Theorem}[section]
 \newtheorem{cor}[thm]{Corollary}
 \newtheorem{lem}[thm]{Lemma}
 \newtheorem{prop}[thm]{Proposition}
 \theoremstyle{definition}
 \newtheorem{defn}[thm]{Definition}
 \theoremstyle{remark}
 \newtheorem{rem}[thm]{Remark}
 \newtheorem{problem}[thm]{Problem}
 \numberwithin{equation}{section}
\newcommand{\spt}{\operatorname{spt}}
\newcommand*{\e}{\mathrm{e}}
\renewcommand*{\i}{\mathrm{i}}
\newcommand{\m}{\operatorname{m}}
\renewcommand{\tilde}{\widetilde}
\renewcommand{\Re}{\operatorname{Re}}
\renewcommand{\Im}{\operatorname{Im}}
\DeclareMathOperator*{\wlim}{w-lim}
\DeclareMathOperator{\ind}{ind}
\begin{document}

%
%
%
%
%
%
%
%
%

\title
 {On higher index differential-algebraic equations in infinite dimensions}

\author{Sascha Trostorff}

\address{%
Insitut f\"ur Analysis\\
Fakult\"at Mathematik\\
Technische Universit\"at Dresden\\
Germany}

\email{sascha.trostorff@tu-dresden.de}

\author{Marcus Waurick}
\address{Department of Mathematics and Statistics\\
University of Strathclyde\\
Glasgow, United Kingdom}
\email{marcus.waurick@strath.ac.uk}
\subjclass{Primary: 34A09, Secondary: 34A12, 34A30, 34G10}

\keywords{Differential-algebraic equations, higher index, infinite dimensional state space, consistent initial values, distributional solutions}

\date{October 23, 2017}

\begin{abstract}
We consider initial value problems for differential-algebraic equations in a possibly infinite-dimensional Hilbert space. Assuming a growth condition for the associated operator pencil, we prove existence and uniqueness of solutions for arbitrary initial values in a distributional sense. Moreover, we construct a nested sequence of subspaces for initial values in order to obtain classical solutions.
\end{abstract}

\maketitle

\section{Introduction and main results}

In this short note, we consider two solution concepts of differential-algebraic equations (DAEs) in infinite dimensions. For this, let $E$ and
$A$ be bounded linear operators in some possibly infinite dimensional
Hilbert space $H$.

We consider the implicit initial value problem
\begin{equation}
\tag{{\ensuremath{\ast}}}\begin{cases}
Eu'(t)+Au(t)=0, & t>0,\\
u(0+)=u_{0}
\end{cases}\label{eq:IVP0}
\end{equation}
for some given $u_{0}\in H$. In order to talk about a well-defined
problem in \prettyref{eq:IVP0}, we assume that the pair $(E,A)$
is \emph{regular}, that is, 
\begin{align*}
\exists\nu\in\mathbb{R}\colon & \mathbb{C}_{\Re>\nu}\subseteq \rho(E,A),\\
\exists C\geq0,k\in\mathbb{N}\:\forall s\in\mathbb{C}_{\Re>\nu}\colon & \|\left(sE+A\right)^{-1}\|\leq C|s|^{k},
\end{align*}
where 
\[
 \rho(E,A)\coloneqq \{s\in \mathbb{C}\,;\, (sE+A)^{-1}\in L(H)\}.
\]

We note here that these two conditions are our replacements for regularity
in finite dimensions. Indeed, for $H$ finite-dimensional, $(E,A)$
is called regular, if $\det(sE+A)\neq0$ for some $s\in\mathbb{C}$.
Thus, $s\mapsto\det(sE+A)$ is a polynomial of degree at most $\dim H$,
which is not identically zero. The growth condition is a consequence
of the Weierstrass or Jordan normal form theorem valid for finite
spatial dimensions, see e.g.~\cite{Berger2012,Dai1989,Mehrmann2006}.
The smallest possible $k\in\mathbb{N}$ occurring in the resolvent
estimate is called the \emph{index of $(E,A)$}:
\[
\ind(E,A)\coloneqq\min\{k\in\mathbb{N}\,;\,\exists C\geq0\,\forall s\in\mathbb{C}_{\Re>\nu}:\|\left(sE+A\right)^{-1}\|\leq C|s|^{k}\}.
\]
We shall also define a sequence of (initial value) spaces associated with $(E,A)$:
\begin{equation*}
\mathrm{IV}_{0} \coloneqq H\mbox{ and } \mathrm{IV}_{k+1} \coloneqq\{x\in H;Ax\in E[\mathrm{IV}_{k}]\}\quad(k\in\mathbb{N}).
\end{equation*}

A first observation is the following.
\begin{prop}
\label{prop:stabInde}Let $k=\ind(E,A)$ and assume that $E[\mathrm{IV}_{k}]\subseteq H$
is closed. Then $\mathrm{IV}_{k+1}=\mathrm{IV}_{k+2}$.
\end{prop}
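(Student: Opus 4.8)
The plan is to reduce the statement to a range-stabilisation property of a single bounded operator. The inclusion $\mathrm{IV}_{k+2}\subseteq\mathrm{IV}_{k+1}$ is free: one checks by induction that $\mathrm{IV}_{j+1}\subseteq\mathrm{IV}_j$, whence $E[\mathrm{IV}_{j+1}]\subseteq E[\mathrm{IV}_j]$ and therefore $\mathrm{IV}_{j+2}\subseteq\mathrm{IV}_{j+1}$. For the reverse inclusion I would fix a real $s_0>\nu$ (so that $s_0\in\rho(E,A)$) and put $F:=(s_0E+A)^{-1}E\in L(H)$. The first step is the purely algebraic identity $\mathrm{IV}_j=\operatorname{ran}(F^j)$, proved by induction on $j$: if $w=Fu'$ with $u'\in\mathrm{IV}_{j-1}$, then $Aw=E(u'-s_0w)\in E[\mathrm{IV}_{j-1}]$, giving $\operatorname{ran}(F^j)\subseteq\mathrm{IV}_j$; conversely, for $x\in\mathrm{IV}_{j+1}$ one writes $Ax=EF^jz$, so that $(s_0E+A)x=E(s_0x+F^jz)$ and $x=s_0Fx+F^{j+1}z$, and inserting $x\in\mathrm{IV}_j=\operatorname{ran}(F^j)$ shows $x\in\operatorname{ran}(F^{j+1})$.

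In particular $M:=\mathrm{IV}_{k+1}=\operatorname{ran}(F^{k+1})=(s_0E+A)^{-1}\big[E[\mathrm{IV}_k]\big]$; since $(s_0E+A)^{-1}$ is a homeomorphism of $H$, the hypothesis that $E[\mathrm{IV}_k]$ be closed is equivalent to $M$ being closed. As $M$ is $F$-invariant, the assertion $\mathrm{IV}_{k+1}=\mathrm{IV}_{k+2}$ now reads $F[M]=M$, i.e.\ $G:=F|_M\in L(M)$ is surjective. To bring in the growth condition I would transport it to $F$: from $sE+A=(s_0E+A)\big(I+(s-s_0)F\big)$ and the substitution $z=-1/(s-s_0)$ one gets $I+(s-s_0)F=z^{-1}(z-F)$, whence $(z-F)^{-1}=z\,[I+(s-s_0)F]^{-1}=z\,(sE+A)^{-1}(s_0E+A)$; combining $\|(sE+A)^{-1}\|\le C|s|^k$ with $|s|\sim|z|^{-1}$ yields a bound $\|(z-F)^{-1}\|\le C'|z|^{-(k+1)}$ on the region $\Omega=\{-1/(s-s_0):s\in\mathbb{C}_{\Re>\nu}\}$, which accumulates at $0$ (e.g.\ along $z=-1/t$, $t\to\infty$).

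The heart of the argument is that this $(k+1)$-fold blow-up is cancelled on $M$. From the resolvent identity one obtains
\[
(z-F)^{-1}F^{k+1}=z^{k+1}(z-F)^{-1}-\sum_{j=0}^{k}z^{k-j}F^{j}.
\]
Because $M$ is closed, $F^{k+1}\colon H\to M$ is a surjection onto a Banach space, so the open mapping theorem provides a constant $C$ such that every $x\in M$ has a preimage $v$ with $\|v\|\le C\|x\|$. Substituting $x=F^{k+1}v$ into the identity and using the resolvent bound, the right-hand side is estimated by $K\|x\|$ with $K$ independent of $x\in M$ and of $z\in\Omega$ near $0$. Moreover $F$ commutes with $(z-F)^{-1}$, so $(z-F)^{-1}x=F^{k+1}(z-F)^{-1}v\in M$; thus $(z-F)^{-1}$ restricts to the resolvent $(z-G)^{-1}$ of $G$ on $M$, and we have shown $\|(z-G)^{-1}\|\le K$ uniformly for $z\in\Omega$ as $z\to 0$.

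Finally, a uniformly bounded resolvent forces invertibility at $0$: picking $z_n\in\Omega$ with $z_n\to0$, we have $\|z_n(z_n-G)^{-1}\|\le|z_n|K\to0$, so $G(z_n-G)^{-1}=z_n(z_n-G)^{-1}-I\to-I$ in operator norm. For large $n$ this operator is invertible, hence $G$ admits a bounded right inverse and is onto; therefore $F[M]=M$, i.e.\ $\mathrm{IV}_{k+2}=\mathrm{IV}_{k+1}$. I expect the main obstacle to be exactly this uniform-bound step: the bare estimate $\|(z-F)^{-1}\|\le C'|z|^{-(k+1)}$ diverges at $0$ and, since $0$ need not be isolated in the spectrum of $F$ (the spectrum may accumulate at $0$ from within $\mathbb{C}\setminus\Omega$), it provides no pole structure on its own. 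The closedness of $M$ is precisely what converts the pointwise cancellation on $M$ into a genuine operator-norm bound for $(z-G)^{-1}$, and this is essentially the only place where the hypothesis enters.
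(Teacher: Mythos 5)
Your proof is correct, and it takes a genuinely different route from the paper's. The paper (Proposition \ref{prop:stabInd0}) argues by weak compactness: for $x\in\mathrm{IV}_{k+1}$ with $Ax=Ex_{0}$, $x_{0}\in\mathrm{IV}_{k}$, it sets $y_{n}\coloneqq n(nE+A)^{-1}Ex_{0}\in\mathrm{IV}_{k+1}$, obtains boundedness of $(y_{n})_{n}$ from the expansion in Lemma \ref{lem:aux}(e) (which is your resolvent identity for $F$ in disguise, evaluated at $z=-1/n$ and without the shift $s_{0}$) together with the index-$k$ growth bound, passes to a weakly convergent subsequence, uses that $\mathrm{IV}_{k+1}=A^{-1}[E[\mathrm{IV}_{k}]]$ is closed, hence weakly closed, and checks $Ey=Ax$ in the weak limit. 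You instead make everything norm-quantitative: the identification $\mathrm{IV}_{j}=\operatorname{ran}(F^{j})$, the reduction to surjectivity of $G=F|_{M}$, the transported estimate $\|(z-F)^{-1}\|\leq C'|z|^{-(k+1)}$, the open-mapping constant on $M$ cancelling the blow-up, and the norm convergence $G(z_{n}-G)^{-1}\to-I$. What your route buys: it never invokes weak sequential compactness, so it works verbatim when $H$ is merely a Banach space, where the paper's extraction of a weakly convergent subsequence would need reflexivity; moreover, the same uniform bound yields injectivity of $G$ (if $Gx=0$ then $x=z_{n}(z_{n}-G)^{-1}x\to0$), so $G$ is in fact an isomorphism of $M$ and you essentially recover Theorem \ref{thm:Eiso} as a by-product, which the paper proves separately by a second weak-limit argument. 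What the paper's route buys is brevity, staying entirely inside the elementary Hilbert-space toolkit. Two small repairs to your write-up, neither affecting correctness: the displayed relation should read $(z-F)^{-1}=z^{-1}\left[I+(s-s_{0})F\right]^{-1}$, equivalently $z-F=z\left(I+(s-s_{0})F\right)$ (your factor $z$ instead of $z^{-1}$ would give the power $|z|^{1-k}$, not the bound $C'|z|^{-(k+1)}$ that you correctly state and subsequently use); and in the induction for $\operatorname{ran}(F^{j})\subseteq\mathrm{IV}_{j}$ you need $u'-s_{0}w\in\mathrm{IV}_{j-1}$, so you should note that also $w\in\operatorname{ran}(F^{j})\subseteq\operatorname{ran}(F^{j-1})\subseteq\mathrm{IV}_{j-1}$ by the induction hypothesis.
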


Since the sequence of spaces $(\mathrm{IV}_{k})_{k}$ is decreasing
(see \prettyref{lem:aux}), Proposition \ref{prop:stabInde} leads to the
following question.
\begin{problem}
\label{prob:indexProblem}Assume that $E[\mathrm{IV}_{j}]\subseteq H$
is closed for each $j\in\mathbb{N}$. Do we then have
\[
\min\{k\in\mathbb{N};\mathrm{IV}_{k+1}=\mathrm{IV}_{k+2}\}=\ind(E,A)?
\]
\end{problem}

With the spaces $(\mathrm{IV}_{k})_{k}$ at hand, we can present the
main theorem of this article.
\begin{thm}
\label{thm:mainTh}Assume that $E[\mathrm{IV}_{\ind(E,A)}]\subseteq H$
is closed, $u_{0}\in\mathrm{IV}_{\ind(E,A)+1}$. Then there exists a unique continuously differentiable
function $u\colon\mathbb{R}_{>0}\to H$ with $u(0+)=u_{0}$ such that
\[
Eu'(t)+Au(t)=0\quad(t>0).
\]
\end{thm}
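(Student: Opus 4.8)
The plan is to solve $(\ast)$ by the vector-valued Laplace transform. Formally transforming the equation and using $u(0+)=u_0$ gives $(sE+A)\hat u(s)=Eu_0$, so the natural candidate for the solution is the inverse Laplace transform of
\[
\hat u(s)=(sE+A)^{-1}Eu_0\qquad(\Re s>\nu),
\]
i.e. the Bromwich integral $u(t)=\frac{1}{2\pi\i}\int_{\gamma+\i\mathbb{R}}\e^{st}(sE+A)^{-1}Eu_0\,ds$ for some $\gamma>\nu$. The difficulty is that the growth condition only yields $\|(sE+A)^{-1}Eu_0\|\le C|s|^{k}\|E\|\,\|u_0\|$ with $k=\ind(E,A)$, so this integral is not absolutely convergent and does not obviously represent a $C^1$ function.

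To overcome this I would exploit $u_0\in\mathrm{IV}_{k+1}$ together with the resolvent identity $(sE+A)^{-1}E=\tfrac1s\bigl(I-(sE+A)^{-1}A\bigr)$. Since $E[\mathrm{IV}_k]$ is closed, Proposition~\ref{prop:stabInde} gives $\mathrm{IV}_{k+1}=\mathrm{IV}_{k+2}$; hence, starting from $v_0:=u_0\in\mathrm{IV}_{k+1}=\mathrm{IV}_{k+2}$ and using that $Ax\in E[\mathrm{IV}_{k+1}]$ whenever $x\in\mathrm{IV}_{k+2}$, I can inductively construct an infinite chain $(v_j)_{j\ge0}$ in $\mathrm{IV}_{k+1}$ with $Av_j=Ev_{j+1}$. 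Substituting $Av_j=Ev_{j+1}$ into the resolvent identity and iterating $N$ times yields the exact identity
\[
(sE+A)^{-1}Eu_0=\sum_{j=0}^{N-1}\frac{(-1)^j}{s^{j+1}}v_j+\frac{(-1)^N}{s^N}(sE+A)^{-1}Ev_N .
\]
The finite sum is the Laplace transform of the $H$-valued polynomial $P(t)=\sum_{j=0}^{N-1}\frac{(-1)^jt^j}{j!}v_j$, whereas the remainder is $O(|s|^{k-N})$ by the growth condition. Choosing $N$ large (say $N\ge k+3$) makes both the remainder and its first $s$-multiple integrable along $\gamma+\i\mathbb{R}$, so its inverse transform $r$ is a well-defined $C^1$ (in fact arbitrarily smooth) $H$-valued function, and a contour-shift argument gives $r(0+)=0$. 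Setting $u:=P+r$ produces a $C^1$ function with $u(0+)=v_0=u_0$.

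It remains to verify that $u$ solves the equation. As $u$ is $C^1$, exponentially bounded, and has Laplace transform $\hat u(s)=(sE+A)^{-1}Eu_0$, the rule $\mathcal L[u'](s)=s\hat u(s)-u(0+)$ gives
\[
\mathcal L[Eu'+Au](s)=E\bigl(s\hat u(s)-u_0\bigr)+A\hat u(s)=(sE+A)\hat u(s)-Eu_0=0,
\]
so $Eu'+Au=0$ on $(0,\infty)$ by injectivity of the Laplace transform. Equivalently, one checks this by hand: differentiating $P$ and using $Ev_{j+1}=Av_j$ telescopes $EP'+AP$ down to a single monomial in $t$, which is exactly cancelled by $Er'+Ar$. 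For uniqueness, the difference $w$ of two solutions satisfies $Ew'+Aw=0$ with $w(0+)=0$; within the exponentially bounded class (to which the constructed $u$ belongs) one transforms to obtain $(sE+A)\hat w(s)=0$, hence $\hat w\equiv0$ and $w=0$, which is the classical instance of the distributional uniqueness.

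I expect the main obstacle to be precisely the rigorous inversion step: converting the merely polynomially bounded resolvent into a genuine $C^1$ solution. The algebraic peeling is the crux, and it is here that both hypotheses are used decisively. The membership $u_0\in\mathrm{IV}_{k+1}$ supplies the first links of the chain, while the closedness of $E[\mathrm{IV}_k]$, through Proposition~\ref{prop:stabInde} and the resulting stabilization $\mathrm{IV}_{k+1}=\mathrm{IV}_{k+2}$, is exactly what allows the chain---and hence the gain in decay of the remainder---to be continued arbitrarily far.
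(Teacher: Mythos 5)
Your existence argument is correct, and it takes a genuinely different route from the paper. The paper first proves (Theorem \ref{thm:Eiso}) that $\tilde{E}\colon\mathrm{IV}_{k+1}\to E[\mathrm{IV}_{k}]$, $x\mapsto Ex$, is a Banach space isomorphism, and then reduces the DAE to the ordinary differential equation $u'=-\tilde{E}^{-1}Au$ on the Hilbert space $\mathrm{IV}_{k+1}$, defining $u(t)=\exp\bigl(-t\tilde{E}^{-1}A\bigr)u_{0}$; no Laplace inversion is needed. You instead use Proposition \ref{prop:stabInde} only through the stabilization $\mathrm{IV}_{k+1}=\mathrm{IV}_{k+2}$ to build the chain $Av_{j}=Ev_{j+1}$, peel $(sE+A)^{-1}Eu_{0}$ into a polynomial part plus a remainder of order $|s|^{k-N}$, and invert by a Bromwich integral. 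Your steps check out: the chain exists (only finitely many $v_{j}$ are needed, so no norm control is required), your iterated identity is the exact analogue of Lemma \ref{lem:aux}(d)--(e), with $N\geq k+3$ the remainder $r$ is $C^{1}$ with $r(0+)=0$ by the contour shift, and the telescoping indeed leaves $EP'+AP=(-1)^{N-1}\frac{t^{N-1}}{(N-1)!}Av_{N-1}$, which is cancelled by $Er'+Ar=(-1)^{N}\frac{t^{N-1}}{(N-1)!}Ev_{N}$ since $Ev_{N}=Av_{N-1}$. The two constructions produce the same solution: with the paper's isomorphism one can take $v_{j}=(\tilde{E}^{-1}A)^{j}u_{0}$ canonically, and your $P$ is then the truncated exponential series, so you are reconstructing the paper's semigroup orbit through its Laplace transform. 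The paper's route buys a canonical, choice-free formula and continuous dependence on $u_{0}$ within $\mathrm{IV}_{k+1}$; yours avoids Theorem \ref{thm:Eiso} altogether (neither injectivity nor surjectivity of $E$ on $\mathrm{IV}_{k+1}$ is used, only the stabilization of the $\mathrm{IV}$-chain), at the cost of noncanonical choices $v_{j}$ and heavier analytic machinery.

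On uniqueness you prove less than the statement asserts: you exclude non-exponentially-bounded $C^{1}$ solutions. Note, however, that the paper's own proof makes essentially the same reduction --- it asserts that any $C^{1}$ solution with $u(0+)=u_{0}$, extended by zero, lies in $L_{2,\nu}(\mathbb{R};H)$ for $\nu$ large enough and then appeals to the uniqueness in Theorem \ref{thm:solthdis} --- so your argument is at the same level of rigor and more explicit about the restriction. If you want uniqueness without any growth hypothesis, you can localize instead: for the difference $w$ of two solutions and a smooth cutoff $\varphi$ with $\varphi=1$ on $(-\infty,T]$ and $\varphi=0$ on $[T+1,\infty)$, one obtains $(\partial_{0,\nu}E+A)(\varphi w)=\varphi'Ew$ with right-hand side supported in $[T,T+1]$, and the causality statement of Corollary \ref{cor:solOp} then forces $w=0$ on $(0,T)$; letting $T\to\infty$ gives $w=0$.
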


With Proposition \ref{prop:stabInde} and \prettyref{thm:mainTh}, it is possible to derive the following consequence.

\begin{cor}\label{cor:mainCor}Assume that $E[\mathrm{IV}_{j}]\subseteq H$
is closed for each $j\in\mathbb{N}$, $u_0\in H$. Then there exists a continuously differentiable function 
 $u\colon\mathbb{R}_{>0}\to H$ with $u(0+)=u_{0}$ and
\[
Eu'(t)+Au(t)=0\quad(t>0),
\]
if, and only if, $u_{0}\in\mathrm{IV}_{\ind(E,A)+1}$.
\end{cor}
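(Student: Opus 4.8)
The plan is to deduce Corollary~\ref{cor:mainCor} by combining the existence statement of Theorem~\ref{thm:mainTh} with a uniqueness/necessity argument that identifies $\mathrm{IV}_{\ind(E,A)+1}$ as exactly the set of admissible initial values. The forward implication is essentially immediate: assuming $u_0\in\mathrm{IV}_{\ind(E,A)+1}$ and that $E[\mathrm{IV}_{\ind(E,A)}]$ is closed (which follows from the hypothesis that all $E[\mathrm{IV}_j]$ are closed), Theorem~\ref{thm:mainTh} directly produces the desired continuously differentiable solution $u$ with $u(0+)=u_0$. So the substantive direction is the converse: I must show that if a classical solution $u$ with $u(0+)=u_0$ exists, then necessarily $u_0\in\mathrm{IV}_{\ind(E,A)+1}$.

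For the necessity direction, the natural strategy is to show that any classical solution $u$ takes values in the initial value spaces, and more precisely that $u(t)\in\mathrm{IV}_{\ind(E,A)+1}$ for all $t>0$, so that passing to the limit $t\to 0+$ forces $u_0=u(0+)\in\mathrm{IV}_{\ind(E,A)+1}$ (using that the spaces are closed, hence closed under limits). First I would argue that a solution lands in the stabilized space. From the equation $Eu'(t)=-Au(t)$ one sees that $Au(t)\in E[H]=E[\mathrm{IV}_0]$, giving $u(t)\in\mathrm{IV}_1$. The key inductive step is to bootstrap this: if $u(t)\in\mathrm{IV}_k$ for all $t$, then since $u$ is continuously differentiable, $u'(t)$ is itself a limit of difference quotients $\frac{u(t+h)-u(t)}{h}$, each lying in $\mathrm{IV}_k$; using closedness of $\mathrm{IV}_k$ (which I would derive from closedness of the $E[\mathrm{IV}_j]$, together with Proposition~\ref{prop:stabInde} and the auxiliary Lemma~\ref{lem:aux}), one gets $u'(t)\in\mathrm{IV}_k$, hence $Au(t)=-Eu'(t)\in E[\mathrm{IV}_k]$, which is precisely the statement $u(t)\in\mathrm{IV}_{k+1}$. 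Iterating up to $k=\ind(E,A)$ and invoking Proposition~\ref{prop:stabInde} to know the chain stabilizes at $\mathrm{IV}_{\ind(E,A)+1}=\mathrm{IV}_{\ind(E,A)+2}$ yields $u(t)\in\mathrm{IV}_{\ind(E,A)+1}$ for every $t>0$.

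The final step is the limit $t\to 0+$: since $u$ is continuous up to $0$ with $u(0+)=u_0$ and each value $u(t)$ lies in the closed subspace $\mathrm{IV}_{\ind(E,A)+1}$, closedness gives $u_0\in\mathrm{IV}_{\ind(E,A)+1}$, completing the necessity and hence the equivalence. I expect the main obstacle to be justifying the closedness of the abstract spaces $\mathrm{IV}_k$ themselves (as opposed to their images $E[\mathrm{IV}_k]$), since the definition $\mathrm{IV}_{k+1}=\{x\in H: Ax\in E[\mathrm{IV}_k]\}$ only manifestly gives closedness when $A$ is bounded and $E[\mathrm{IV}_k]$ is closed—so I would need to carefully invoke boundedness of $A$ and the closedness hypothesis to conclude that $\mathrm{IV}_{k+1}$, as the preimage of a closed set under the bounded operator $A$, is closed. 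The secondary subtlety is making the difference-quotient argument rigorous, i.e.\ confirming that $u'(t)$ genuinely lies in the closed subspace $\mathrm{IV}_k$; this is where continuous differentiability and the closedness established above must be used together.
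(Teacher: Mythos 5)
Your proposal is correct and follows essentially the same route as the paper: the forward direction is Theorem~\ref{thm:mainTh}, and the necessity is exactly the paper's concluding remark, namely the bootstrap $Au(t)=-Eu'(t)\in E[\mathrm{IV}_k]$ combined with the difference-quotient argument showing $u'(t)\in\mathrm{IV}_k$ (using that $\mathrm{IV}_{k+1}=A^{-1}[E[\mathrm{IV}_k]]$ is closed since $A$ is bounded), the stabilization of the chain via Proposition~\ref{prop:stabInde}, and the limit $u_0=u(0+)\in\mathrm{IV}_{\ind(E,A)+1}$ by closedness. You also correctly flagged the only subtlety (closedness of the spaces $\mathrm{IV}_k$ themselves), which the paper resolves in the same way.
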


Corollary \ref{cor:mainCor} suggests that the answer to Problem \ref{prob:indexProblem}
is in the affirmative for $H$ being finite-dimensional. 

Also in our main result, there is room for improvement: In applications,
it is easier to show that $R(E)\subseteq H$ is closed as the $\mathrm{IV}$-spaces
are not straightforward to compute. Thus, we ask whether the latter
theorem can be improved in the following way. 
\begin{problem}
Does $R(E)\subseteq H$ closed imply the closedness of $E[\mathrm{IV}_{\ind(E,A)}]\subseteq H$ or even closedness of $E[\mathrm{IV}_{j}]\subseteq H$ for all $j\in\mathbb{N}$?
\end{problem}

We shall briefly comment on the organization of this article. In the
next section, we introduce the time-derivative operator in a suitably
weighted vector-valued $L_{2}$-space. This has been used intensively
in the framework of so-called `evolutionary equations', see \cite{PicPhy}.
With this notion, it is possible to obtain a \emph{distributional}
solution of \prettyref{eq:IVP0} such that the differential algebraic
equation holds in an integrated sense, where the number of integrations
needed corresponds to the index of the DAE. We conclude this article
with the proofs of Proposition \ref{prop:stabInde}, \prettyref{thm:mainTh}, and Corollary \ref{cor:mainCor}.
We emphasize that we do not employ any Weierstrass or Jordan normal
theory in the proofs of our main results. We address the case
of unbounded $A$ to future research. The case of index $0$ is discussed
in \cite{Trostorff2017b}, where also exponential stability and dichotomies
are studied.

\section{The time derivative and weak solutions of DAEs}

Throughout this section, we assume that $H$ is a Hilbert space and
that $E,A\in L(H)$ with $(E,A)$ regular. We start out with the definition
of the space of (equivalence classes of) vector-valued $L_{2}$ functions:
Let $\nu\in\mathbb{R}$. Then we set
\[
L_{2,\nu}(\mathbb{R};H)\coloneqq\left\{ f:\mathbb{R}\to H\,;\,f\mbox{ measurable,\,}\intop_{\mathbb{R}}|f(t)|_{H}^{2}\exp(-2\nu t)\,\mathrm{d}t<\infty\right\} ,
\]
see also \cite{PicPhy,KPSTW14_OD,Picard1989}. Note that $L_{2,0}(\mathbb{R};H)=L_{2}(\mathbb{R};H)$.
We define $H_{\nu}^{1}(\mathbb{R};H)$ to be the ($H$-valued) Sobolev
space of $L_{2,\nu}(\mathbb{R};H)$-functions with weak derivative
representable as $L_{2,\nu}(\mathbb{R};H)$-function. With this,
we can define the derivative operator 
\[
\partial_{0,\nu}\colon H_{\nu}^{1}(\mathbb{R};H)\subseteq L_{2,\nu}(\mathbb{R};H)\to L_{2,\nu}(\mathbb{R};H),\phi\mapsto\phi'.
\]
In the next theorem we recall some properties of the operator just
defined. For this, we introduce the \emph{Fourier\textendash Laplace
transformation} $\mathcal{L}_{\nu}\colon L_{2,\nu}(\mathbb{R};H)\to L_{2}(\mathbb{R};H)$
as being the unitary extension of
\[
\mathcal{L}_{\nu}\phi(t)\coloneqq\frac{1}{\sqrt{2\pi}}\int_{\mathbb{R}}\phi(s)\e^{-\left(\i t+\nu\right)s}\mathrm{d}s\quad(\phi\in C_{c}(\mathbb{R};H),\,t\in\mathbb{R}),
\]
where $C_{c}(\mathbb{R};H)$ denotes the space of compactly supported,
continuous $H$-valued functions defined on $\mathbb{R}$. Moreover,
let 
\begin{align*}
\m\colon\{f\in L_{2}(\mathbb{R};H);(t\mapsto tf(t))\in L_{2}(\mathbb{R};H)\}\subseteq L_{2}(\mathbb{R};H) & \to L_{2}(\mathbb{R};H),\\
f & \mapsto(t\mapsto tf(t))
\end{align*}
be the multiplication by the argument operator with maximal domain.
\begin{thm}[{\cite[Corollary 2.5]{KPSTW14_OD}}]
\label{thm:FLT}Let $\nu\in\mathbb{R}$. Then
\[
\partial_{0,\nu}=\mathcal{L}_{\nu}^{\ast}(\i\m+\nu)\mathcal{L}_{\nu}.
\]
\end{thm}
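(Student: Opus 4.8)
The plan is to verify the identity first on a dense core and then lift it to an equality of closed operators. I would take the core $C_c^\infty(\mathbb{R};H)$, which is dense in $H^1_\nu(\mathbb{R};H)$ for the graph norm of $\partial_{0,\nu}$ (standard mollification and cut-off). For $\phi$ in this core, integration by parts in
\[
\mathcal{L}_\nu(\phi')(t)=\frac{1}{\sqrt{2\pi}}\int_{\mathbb{R}}\phi'(s)\e^{-(\i t+\nu)s}\,\mathrm{d}s
\]
produces no boundary term (compact support) and moves the derivative onto the exponential, giving the factor $\i t+\nu$. Hence $\mathcal{L}_\nu(\partial_{0,\nu}\phi)=(\i\m+\nu)\mathcal{L}_\nu\phi$, and applying the unitary $\mathcal{L}_\nu^{\ast}$ yields exactly the asserted identity on the core.

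The main obstacle is matching the domains, i.e.\ upgrading this to an equality of \emph{operators}. Write $B\coloneqq\mathcal{L}_\nu^{\ast}(\i\m+\nu)\mathcal{L}_\nu$. Since $\i\m+\nu$ is a normal (in particular closed) multiplication operator and $\mathcal{L}_\nu$ is unitary, $B$ is closed with $\sigma(B)=\sigma(\i\m+\nu)=\nu+\i\mathbb{R}$. The inclusion $\partial_{0,\nu}\subseteq B$ is the easy direction: for $\phi\in H^1_\nu(\mathbb{R};H)$ pick $\phi_n$ in the core with $\phi_n\to\phi$ and $\phi_n'\to\phi'$ in $L_{2,\nu}(\mathbb{R};H)$; then $\mathcal{L}_\nu\phi_n\to\mathcal{L}_\nu\phi$ and $(\i\m+\nu)\mathcal{L}_\nu\phi_n=\mathcal{L}_\nu\phi_n'\to\mathcal{L}_\nu\phi'$, so closedness of $\i\m+\nu$ forces $\mathcal{L}_\nu\phi\in\operatorname{dom}(\m)$ and $B\phi=\partial_{0,\nu}\phi$.

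For the reverse inclusion I would exploit a common point of the resolvent sets. For any $\lambda$ with $\Re\lambda<\nu$ the operator $\partial_{0,\nu}-\lambda$ is boundedly invertible, its inverse being the convolution $f\mapsto\bigl(t\mapsto\int_{-\infty}^{t}\e^{\lambda(t-s)}f(s)\,\mathrm{d}s\bigr)$, whose $L_{2,\nu}$-boundedness is a Young/Hardy estimate (the kernel $r\mapsto\e^{(\lambda-\nu)r}\mathbf{1}_{r\ge0}$ is integrable). Such $\lambda$ avoids $\nu+\i\mathbb{R}=\sigma(B)$, so $\lambda\in\rho(B)$ as well. Given $\psi\in\operatorname{dom}(B)$, choose $\phi\in\operatorname{dom}(\partial_{0,\nu})$ with $(\lambda-\partial_{0,\nu})\phi=(\lambda-B)\psi$; since $\partial_{0,\nu}\subseteq B$ this gives $(\lambda-B)(\psi-\phi)=0$, and injectivity of $\lambda-B$ forces $\psi=\phi\in\operatorname{dom}(\partial_{0,\nu})$. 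Hence $B\subseteq\partial_{0,\nu}$, and equality follows.

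The only genuinely non-formal ingredient is this bounded invertibility; everything else is either the formal integration by parts or soft closed-operator bookkeeping, and I expect that estimate to be where care is needed. Alternatively, one can bypass the explicit resolvent by factoring $\mathcal{L}_\nu=\mathcal{F}\exp(-\nu\,\cdot\,)$ with $\mathcal{F}$ the unitary Fourier transform on $L_2(\mathbb{R};H)$, using the elementary conjugation $\partial_{0,\nu}=\exp(\nu\,\cdot\,)\,\partial_{0,0}\,\exp(-\nu\,\cdot\,)+\nu$, and invoking the classical relation $\partial_{0,0}=\mathcal{F}^{\ast}\i\m\,\mathcal{F}$; this reduces the statement to the $\nu=0$ case on $L_2(\mathbb{R};H)$.
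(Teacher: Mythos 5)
Your proof is correct, and it is worth noting at the outset that the paper itself offers no proof to compare against: Theorem \ref{thm:FLT} is quoted verbatim from \cite[Corollary 2.5]{KPSTW14_OD}, so your proposal is in effect a self-contained reconstruction of an imported result. Both halves of your argument are sound. The core computation is the right one: for $\phi\in C_c^\infty(\mathbb{R};H)$, integration by parts gives $\mathcal{L}_\nu(\phi')=(\i\m+\nu)\mathcal{L}_\nu\phi$, and density of $C_c^\infty(\mathbb{R};H)$ in $H^1_\nu(\mathbb{R};H)$ for the graph norm (cut-off with uniformly bounded $\chi_n'$, then mollification) together with closedness of the normal multiplication operator $\i\m+\nu$ yields the inclusion $\partial_{0,\nu}\subseteq B\coloneqq\mathcal{L}_\nu^{\ast}(\i\m+\nu)\mathcal{L}_\nu$. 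Your resolvent argument for the reverse inclusion is also correct: for $\Re\lambda<\nu$ the kernel $r\mapsto\e^{(\lambda-\nu)r}\mathbf{1}_{r\geq0}$ is integrable, so after the unitary substitution $g=\e^{-\nu\cdot}f$ Young's inequality gives boundedness of the candidate resolvent; the two small points you should spell out are (i) that this convolution is a genuine two-sided inverse, i.e.\ $u(t)=\int_{-\infty}^{t}\e^{\lambda(t-s)}f(s)\,\mathrm{d}s$ lies in $H^1_\nu(\mathbb{R};H)$ with $u'=\lambda u+f$, and injectivity of $\partial_{0,\nu}-\lambda$ (the only $H^1_\nu$-solutions of $u'=\lambda u$ are $t\mapsto\e^{\lambda t}x$, excluded by the weight since $\Re\lambda-\nu<0$), and (ii) that $\lambda-B$ is injective, which holds because $B$ is unitarily equivalent to multiplication by $\i t+\nu$, whose essential range $\nu+\i\mathbb{R}$ omits $\lambda$. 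With surjectivity of $\lambda-\partial_{0,\nu}$ and injectivity of $\lambda-B$, the standard extension lemma forces $B=\partial_{0,\nu}$. Your alternative route is actually the shortest and is essentially how such statements are proved in the cited literature: since $\mathcal{L}_\nu=\mathcal{F}\e^{-\nu\m}$ with both factors unitary, domains transport exactly under conjugation, the identity $\partial_{0,\nu}=\e^{\nu\cdot}\,\partial_{0,0}\,\e^{-\nu\cdot}+\nu$ is an elementary computation with weak derivatives, and everything reduces to the classical $\partial_{0,0}=\mathcal{F}^{\ast}\i\m\,\mathcal{F}$; this bypasses the resolvent estimate entirely, at the cost of invoking the $\nu=0$ case as known.
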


\begin{rem}
 A direct consequence of \prettyref{thm:FLT} is the continuous invertibility of $\partial_{0,\nu}$ if $\nu\ne 0$.
\end{rem}

\begin{cor}
\label{cor:solOp}Let $\nu>0$ be such that $\rho(E,A)\supseteq\mathbb{C}_{\Re>\nu}$
and $\|\left(sE+A\right)^{-1}\|\leq C|s|^{\ind(E,A)}$ for some $C\geq0$
and all $s\in\mathbb{C}_{\Re>\nu}$. Then
\[
\partial_{0,\nu}^{-k}\left(\partial_{0,\nu}E+A\right)^{-1}\in L(L_{2,\nu}(\mathbb{R};H)),
\]
where $k=\ind(E,A)$. Moreover, $\partial_{0,\nu}^{-k}\left(\partial_{0,\nu}E+A\right)^{-1}$
is causal, i.e., for each $f\in L_{2,\nu}(\mathbb{R};H)$ with $\spt f\subseteq\mathbb{R}_{\geq a}$
for some $a\in\mathbb{R}$ it follows that 
\[
\spt\partial_{0,\nu}^{-k}\left(\partial_{0,\nu}E+A\right)^{-1}f\subseteq\mathbb{R}_{\geq a}.
\]
\end{cor}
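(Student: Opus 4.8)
\emph{Proof proposal.} The plan is to diagonalize the operator by the Fourier–Laplace transform $\mathcal{L}_\nu$ and to reduce both assertions to properties of a single operator-valued multiplication operator. By \prettyref{thm:FLT} one has $\partial_{0,\nu}=\mathcal{L}_\nu^*(\i\m+\nu)\mathcal{L}_\nu$, so $\partial_{0,\nu}$ is unitarily equivalent to multiplication by the symbol $s=\i t+\nu$, $t\in\mathbb{R}$, which ranges over the line $\Re s=\nu$. Since $E,A\in L(H)$ act pointwise in the spatial variable and hence commute with $\mathcal{L}_\nu$, I obtain $\mathcal{L}_\nu(\partial_{0,\nu}E+A)\mathcal{L}_\nu^*=(\i\m+\nu)E+A$, the operator of multiplication by $s\mapsto sE+A$. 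As $\nu>0$, the remark following \prettyref{thm:FLT} shows that $\partial_{0,\nu}$, and thus $\partial_{0,\nu}^{-k}$, is bounded and corresponds to multiplication by $s^{-k}$. Hence, writing
\[
g(s):=s^{-k}(sE+A)^{-1}\qquad(\Re s>\nu),
\]
the operator under consideration is unitarily equivalent to the multiplication operator $M_g$ induced by $g$.

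The boundedness of $M_g$ rests on the decisive cancellation built into $g$. The growth estimate gives
\[
\|g(s)\|=|s|^{-k}\,\|(sE+A)^{-1}\|\le|s|^{-k}\cdot C|s|^{k}=C\qquad(\Re s>\nu),
\]
so $g$ is a uniformly bounded, holomorphic $L(H)$-valued function on $\mathbb{C}_{\Re>\nu}$. Multiplication operators on $L_2(\mathbb{R};H)$ induced by strongly measurable $L(H)$-valued symbols are bounded, with norm equal to the essential supremum of the pointwise operator norms, evaluated on the line $\Re s=\nu$ on which the calculus for $\i\m+\nu$ lives; since $g$ is bounded and holomorphic, its boundary values on that line exist and still obey the bound $C$. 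Thus $\|M_g\|\le C$ and $\partial_{0,\nu}^{-k}(\partial_{0,\nu}E+A)^{-1}=\mathcal{L}_\nu^* M_g\mathcal{L}_\nu\in L(L_{2,\nu}(\mathbb{R};H))$. I would remark that $(\partial_{0,\nu}E+A)^{-1}$ is, a priori, only densely defined, its symbol $(sE+A)^{-1}$ growing like $|s|^{k}$; the point is that its composition with the bounded operator $\partial_{0,\nu}^{-k}$ agrees with $M_g$ on that dense domain and therefore extends to the everywhere-defined bounded operator $M_g$.

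For causality I would invoke the Paley–Wiener theorem in the operator form standard in this framework: whenever $M\colon\mathbb{C}_{\Re>\nu}\to L(H)$ is holomorphic and bounded, the operator $\mathcal{L}_\nu^* M(\i\m+\nu)\mathcal{L}_\nu$ is causal. It therefore remains only to observe that $g$ is holomorphic on $\mathbb{C}_{\Re>\nu}$: indeed $s\mapsto(sE+A)^{-1}$ is holomorphic on the resolvent set $\rho(E,A)\supseteq\mathbb{C}_{\Re>\nu}$, while $s\mapsto s^{-k}$ is holomorphic there because the assumption $\nu>0$ keeps the half-plane away from the origin — this is precisely where $\nu>0$ is needed. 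Together with the bound from the previous paragraph, $g$ is a bounded holomorphic multiplier, and causality follows.

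The main obstacle is not a single computation but the bookkeeping of domains: making the identification $\partial_{0,\nu}^{-k}(\partial_{0,\nu}E+A)^{-1}=\mathcal{L}_\nu^* M_g\mathcal{L}_\nu$ rigorous although $(\partial_{0,\nu}E+A)^{-1}$ is unbounded, and transferring the symbol bound from the open half-plane to the line $\Re s=\nu$ on which the multiplication calculus actually acts. Once these are settled, both assertions collapse to the single clean inequality $\|g(s)\|\le C$ together with the holomorphy of $g$.
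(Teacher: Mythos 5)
Your proposal is correct and follows essentially the same route as the paper: unitary reduction via $\mathcal{L}_\nu$ to the multiplication operator with symbol $s^{-k}(sE+A)^{-1}$, the uniform bound $\|s^{-k}(sE+A)^{-1}\|\le C$ for boundedness, and holomorphy plus boundedness of the symbol for causality. The only difference is cosmetic: you invoke the operator-valued Paley--Wiener statement as a known black box, whereas the paper derives it on the spot from the $\mathcal{H}^2$-characterization of functions supported on $\mathbb{R}_{\ge 0}$ (after reducing to $a=0$ by translation invariance), which is exactly the content of the fact you cite.
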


\begin{proof}
By \prettyref{thm:FLT} and the unitarity of $\mathcal{L}_{\nu}$,
we obtain that the first claim is equivalent to
\[
\left(\i\m+\nu\right)^{-k}\left(\left(\i\m+\nu\right)E+A\right)^{-1}\in L(L_{2}(\mathbb{R};H)),
\]
which, in turn, would be implied by the fact that the function
\[
t\mapsto\left(\i t+\nu\right)^{-k}\left(\left(\i t+\nu\right)E+A\right)^{-1}
\]
belongs to $L^\infty(\mathbb{R};L(H))$. This is, however, true by regularity of $(E,A)$. We now show the
causality. As the operator $\partial_{0,\nu}^{-k}\left(\partial_{0,\nu}E+A\right)^{-1}$
commutes with translation in time, it suffices to prove the claim
for $a=0.$ So let $f\in L_{2,\nu}(\mathbb{R};H)$ with $\spt f\subseteq\mathbb{R}_{\geq0}.$
By a Paley-Wiener type result (see e.g. \cite[19.2 Theorem]{rudin1987real}),
the latter is equivalent to 
\[
(\mathbb{C}_{\Re>\nu}\ni z\mapsto\left(\mathcal{L}_{\Re z}f\right)(\Im z))\in\mathcal{H}^{2}(\mathbb{C}_{\Re>\nu};H),
\]
where $\mathcal{H}^{2}(\mathbb{C}_{\Re>\nu};H)$ denotes the
Hardy-space of $H$-valued functions on the half-plane $\mathbb{C}_{\Re>\nu}$. As 
\[
\left(\mathcal{L}_{\Re z}\partial_{0,\nu}^{-k}\left(\partial_{0,\nu}E+A\right)^{-1}f\right)(\Im z)=z^{-k}\left(zE+A\right)^{-1}\left(\mathcal{L}_{\Re z}f\right)(\Im z)
\]
for each $z\in\mathbb{C}_{\Re>\nu},$ we infer that also 
\[
(\mathbb{C}_{\Re>\nu}\ni z\mapsto\left(\mathcal{L}_{\Re z}\partial_{0,\nu}^{-k}\left(\partial_{0,\nu}E+A\right)^{-1}f\right)(\Im z))\in\mathcal{H}^{2}(\mathbb{C}_{\Re>\nu};H),
\]
due to the boundedness and analyticity of 
\[
\left(\mathbb{C}_{\Re>\nu}\ni z\mapsto z^{-k}\left(zE+A\right)^{-1}\in L(H)\right).
\]
This proves the claim.
\end{proof}
Corollary \ref{cor:solOp} states a particular boundedness property for
the solution operator associated with \prettyref{eq:IVP0}. This can
be made more precise by introducing a scale of extrapolation spaces
associated with $\partial_{0,\nu}$.
\begin{defn}
Let $k\in\mathbb{N}$, $\nu>0$. Then we define $H_{\nu}^{k}(\mathbb{R};H)\coloneqq D(\partial_{0,\nu}^{k})$
endowed with the scalar product $\langle\phi,\psi\rangle_{k}\coloneqq\langle\partial_{0,\nu}^{k}\phi,\partial_{0,\nu}^{k}\psi\rangle_{0}$.
Quite similarly, we define $H_{\nu}^{-k}(\mathbb{R};H)$ as the completion
of $L_{2,\nu}(\mathbb{R};H)$ with respect to $\langle\phi,\psi\rangle_{-k}\coloneqq\langle\partial_{0,\nu}^{-k}\phi,\partial_{0,\nu}^{-k}\psi\rangle_{0}$.
\end{defn}

We observe that the spaces $(H_{\nu}^{k}(\mathbb{R};H))_{k\in\mathbb{Z}}$
are nested in the sense that $j_{k\to\ell}\colon H_{\nu}^{k}(\mathbb{R};H)\hookrightarrow H_{\nu}^{\ell}(\mathbb{R};H),x\mapsto x$,
whenever $k\geq\ell$.
\begin{rem}
The operator $\partial_{0,\nu}^{\ell}$ can be considered as a densely
defined isometry from $H^{k}$ to $H^{k-\ell}$ with dense range for
all $k\in\mathbb{Z}$. The closure of this densely defined isometry
will be given the same name. In this way, we can state the boundedness
property of the solution operator in Corollary \ref{cor:solOp} equivalently
as follows:
\[
\left(\partial_{0,\nu}E+A\right)^{-1}\in L\left(L_{2,\nu}(\mathbb{R};H),H_{\nu}^{-k}(\mathbb{R};H)\right).
\]
More generally, as $\left(\partial_{0,\nu}E+A\right)^{-1}$ and $\partial_{0,\nu}^{-1}$
commute, we obtain 
\[
\left(\partial_{0,\nu}E+A\right)^{-1}\in L\left(H_{\nu}^{j}(\mathbb{R};H),H_{\nu}^{j-k}(\mathbb{R};H)\right)
\]
for each $j\in\mathbb{Z}$. 
\end{rem}

Note that by the Sobolev embedding theorem (see e.g.~\cite[Lemma 5.2]{KPSTW14_OD})
the $\delta$-distribution of point evaluation at $0$ is an element
of $H_{\nu}^{-1}(\mathbb{R};H)$; in fact it is the derivative of
$\chi_{\mathbb{R}_{\geq0}}\in L_{2,\nu}(\mathbb{R};H)=H_{\nu}^{0}(\mathbb{R};H)$.
With these preparations at hand, we consider the following implementation
of the initial value problem stated in \prettyref{eq:IVP0}: Let $u_{0}\in H$.
Find $u\in H_{\nu}^{-k}(\mathbb{R};H)$ such that
\begin{equation}
\left(\partial_{0,\nu}E+A\right)u=\delta\cdot Eu_{0}.\label{eq:IVPdis}
\end{equation}
\begin{thm}
\label{thm:solthdis}Let $(E,A)$ be regular. Then for all $u_{0}\in H$
there exists a unique $u\in H_{\nu}^{-k}(\mathbb{R};H)$ such that
\prettyref{eq:IVPdis} holds. Moreover, we have
\[
u=\chi_{\mathbb{R}_{\geq0}}u_{0}-\left(\partial_{0,\nu}E+A\right)^{-1}\chi_{\mathbb{R}_{\geq0}}Au_{0}
\]
and 
\[
\spt\partial_{0,\nu}^{-k}u\subseteq\mathbb{R}_{\geq0}.
\]
\end{thm}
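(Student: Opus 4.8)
The plan is to produce the candidate $u$ from the stated formula, verify directly that it solves \prettyref{eq:IVPdis}, deduce uniqueness from the injectivity of the pencil on the extrapolation scale, and finally read off the support statement from causality. First I would check that the proposed $u$ indeed lies in $H_{\nu}^{-k}(\mathbb{R};H)$. Since $\nu>0$, the scalar function $\chi_{\mathbb{R}_{\geq0}}$ belongs to $L_{2,\nu}(\mathbb{R};\mathbb{R})$, so that both $\chi_{\mathbb{R}_{\geq0}}u_{0}$ and $\chi_{\mathbb{R}_{\geq0}}Au_{0}$ lie in $L_{2,\nu}(\mathbb{R};H)=H_{\nu}^{0}(\mathbb{R};H)\hookrightarrow H_{\nu}^{-k}(\mathbb{R};H)$. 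By Corollary \ref{cor:solOp} together with the remark following it, $\left(\partial_{0,\nu}E+A\right)^{-1}$ maps $L_{2,\nu}(\mathbb{R};H)$ into $H_{\nu}^{-k}(\mathbb{R};H)$, so the second summand lies in $H_{\nu}^{-k}(\mathbb{R};H)$ as well, and hence $u\in H_{\nu}^{-k}(\mathbb{R};H)$.

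Next I would verify the equation by computation in the scale $(H_{\nu}^{j}(\mathbb{R};H))_{j\in\mathbb{Z}}$, using that $E$ and $A$ act pointwise in $H$ and therefore commute with $\partial_{0,\nu}$ and its powers, and that $\partial_{0,\nu}\chi_{\mathbb{R}_{\geq0}}=\delta$. Applying $\left(\partial_{0,\nu}E+A\right)$ as the bounded map $H_{\nu}^{-k}(\mathbb{R};H)\to H_{\nu}^{-k-1}(\mathbb{R};H)$ to the formula, the second term collapses to $\chi_{\mathbb{R}_{\geq0}}Au_{0}$, while for the first term
\[
\left(\partial_{0,\nu}E+A\right)\chi_{\mathbb{R}_{\geq0}}u_{0}=E\partial_{0,\nu}\chi_{\mathbb{R}_{\geq0}}u_{0}+\chi_{\mathbb{R}_{\geq0}}Au_{0}=\delta\cdot Eu_{0}+\chi_{\mathbb{R}_{\geq0}}Au_{0}.
\]
Subtracting the two contributions leaves $\left(\partial_{0,\nu}E+A\right)u=\delta\cdot Eu_{0}$, as desired. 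For uniqueness I would note that $\left(\partial_{0,\nu}E+A\right)$ is injective on the whole scale $\bigcup_{j}H_{\nu}^{j}(\mathbb{R};H)$, since it admits a two-sided inverse there; in particular its restriction to $H_{\nu}^{-k}(\mathbb{R};H)$ is injective, so any two solutions in $H_{\nu}^{-k}(\mathbb{R};H)$ coincide.

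Finally, for the support claim I would write
\[
\partial_{0,\nu}^{-k}u=\partial_{0,\nu}^{-k}\chi_{\mathbb{R}_{\geq0}}u_{0}-\partial_{0,\nu}^{-k}\left(\partial_{0,\nu}E+A\right)^{-1}\chi_{\mathbb{R}_{\geq0}}Au_{0}.
\]
The first term is supported in $\mathbb{R}_{\geq0}$ because $\partial_{0,\nu}^{-1}$ is integration and hence causal, and $\chi_{\mathbb{R}_{\geq0}}u_{0}$ is supported in $\mathbb{R}_{\geq0}$; the second term is supported in $\mathbb{R}_{\geq0}$ by the causality of $\partial_{0,\nu}^{-k}\left(\partial_{0,\nu}E+A\right)^{-1}$ established in Corollary \ref{cor:solOp}. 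Thus $\spt\partial_{0,\nu}^{-k}u\subseteq\mathbb{R}_{\geq0}$.

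The part requiring the most care is the middle step: one must make the formal algebra rigorous across the extrapolation scale, i.e.\ justify that $\left(\partial_{0,\nu}E+A\right)$ and $\left(\partial_{0,\nu}E+A\right)^{-1}$ genuinely act as mutual inverses between the correct spaces $H_{\nu}^{j}(\mathbb{R};H)$ and $H_{\nu}^{j-k}(\mathbb{R};H)$, that the various commutations hold, and that the identities $\partial_{0,\nu}\chi_{\mathbb{R}_{\geq0}}=\delta$ and $\left(\partial_{0,\nu}E+A\right)\chi_{\mathbb{R}_{\geq0}}u_{0}=\delta\cdot Eu_{0}+\chi_{\mathbb{R}_{\geq0}}Au_{0}$ are valid in $H_{\nu}^{-k-1}(\mathbb{R};H)$. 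The cleanest way to secure all of this is to pass through the Fourier\textendash Laplace transform of \prettyref{thm:FLT}, under which $\partial_{0,\nu}$, its powers, and the pencil become multiplication by $z$, $z^{\pm1}$, and $\left(zE+A\right)^{\pm1}$ on the corresponding Hardy scale, so that the above manipulations reduce to pointwise algebraic identities in $z\in\mathbb{C}_{\Re>\nu}$.
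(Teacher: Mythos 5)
Your proposal is correct and uses essentially the same argument as the paper: the paper obtains existence and uniqueness at once by setting $u=\left(\partial_{0,\nu}E+A\right)^{-1}\delta\cdot Eu_{0}\in H_{\nu}^{-k-1}(\mathbb{R};H)$ and then derives the stated formula via the identity $\left(\partial_{0,\nu}E+A\right)\chi_{\mathbb{R}_{\geq0}}u_{0}=\delta\cdot Eu_{0}+\chi_{\mathbb{R}_{\geq0}}Au_{0}$, whereas you run the same computation in the opposite direction (verify the formula, then deduce uniqueness from the two-sided inverse on the extrapolation scale). The remaining ingredients --- the regularity upgrade to $H_{\nu}^{-k}(\mathbb{R};H)$ via Corollary \ref{cor:solOp} and the support statement via causality --- match the paper's proof exactly.
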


\begin{proof}
Note that the unique solution is given by 
\[
u=\left(\partial_{0,\nu}E+A\right)^{-1}\delta\cdot Eu_{0}\in H_{\nu}^{-k-1}(\mathbb{R};H).
\]
Hence, 
\begin{align*}
u-\chi_{\mathbb{R}_{\geq0}}u_{0} & =\left(\partial_{0,\nu}E+A\right)^{-1}\left(\delta\cdot Eu_{0}-\left(\partial_{0,\nu}E+A\right)\chi_{\mathbb{R}_{\geq0}}u_{0}\right)\\
 & =-\left(\partial_{0,\nu}E+A\right)^{-1}\chi_{\mathbb{R}_{\geq0}}Au_{0},
\end{align*}
which shows the desired formula. Since $\chi_{\mathbb{R}_{\geq0}}u_{0}\in L_{2,\nu}(\mathbb{R};H)\hookrightarrow H_{\nu}^{-k}(\mathbb{R};H)$
and $\left(\partial_{0,\nu}E+A\right)^{-1}\chi_{\mathbb{R}_{\geq0}}Au_{0}\in H_{\nu}^{-k}(\mathbb{R};H)$
by Corollary \ref{cor:solOp} we obtain the asserted regularity for $u$.
The support statement follows from the causality statement in
Corollary \ref{cor:solOp}. 
\end{proof}
In the concluding section, we will discuss the spaces $\mathrm{IV}_{k}$
in connection to $(E,A)$ and will prove the main results of this
paper mentioned in the introduction.

\section{Proofs of the main results and initial value spaces}

Again, we assume that $H$ is a Hilbert space, and that $E,A\in L(H)$
with $(E,A)$ regular. 

At first, we turn to the proof of Proposition \ref{prop:stabInde}. For
this, we note some elementary consequences of the definition of $\mathrm{IV}_{k}$
and of regularity.
\begin{lem}
\label{lem:aux}\begin{enumerate}[(a)]

\item For all $k\in\mathbb{N}$, we have $\mathrm{IV}_{k}\supseteq\mathrm{IV}_{k+1}.$

\item Let $s\in\mathbb{C}\cap\rho(E,A)$. Then
\[
E(sE+A)^{-1}A=A(sE+A)^{-1}E.
\]
\item Let $k\in\mathbb{N}$, $x\in\mathrm{IV}_{k}$. Then for all
$s\in\mathbb{C}\cap\rho(E,A)$ we have
\[
(sE+A)^{-1}Ex\in\mathrm{IV}_{k+1}.
\]

\item Let $s\in\mathbb{C}\cap\rho(E,A)\setminus\{0\}$. Then
\[
(sE+A)^{-1}E=\frac{1}{s}-\frac{1}{s}(sE+A)^{-1}A.
\]

\item Let $k\in\mathbb{N}$, $x\in\mathrm{IV}_{k}$. Then for all
$s\in\mathbb{C}\cap\rho(E,A)\setminus\{0\}$ we have
\[
(sE+A)^{-1}Ex=\frac{1}{s}x+\sum_{\ell=1}^{k}\frac{1}{s^{\ell+1}}x_{\ell}+\frac{1}{s^{k+1}}(sE+A)^{-1}Aw.
\]
for some $w\in H$, $x_{1},\ldots,x_{k}\in H$.

\end{enumerate}
\end{lem}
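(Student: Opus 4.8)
The plan is to establish the five assertions in the order given, since (c) relies on (b) and (e) relies on (d), while (a), (b), (d) are purely formal resolvent manipulations. Parts (c) and (e) will both be proved by induction on $k$, using the same structural fact that membership $x\in\mathrm{IV}_{k+1}$ is by definition the statement $Ax\in E[\mathrm{IV}_{k}]$, which lets the induction descend one level of the filtration at each step.

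Part (a) I would prove by induction on $k$: the inclusion $\mathrm{IV}_{1}\subseteq H=\mathrm{IV}_{0}$ is immediate, and if $\mathrm{IV}_{k+1}\subseteq\mathrm{IV}_{k}$ then any $x\in\mathrm{IV}_{k+2}$ satisfies $Ax\in E[\mathrm{IV}_{k+1}]\subseteq E[\mathrm{IV}_{k}]$ by monotonicity of the image, whence $x\in\mathrm{IV}_{k+1}$. For (b) and (d) I would read off the two factorizations of the identity: from $(sE+A)^{-1}(sE+A)=I$ one obtains $(sE+A)^{-1}A=I-s(sE+A)^{-1}E$, and from $(sE+A)(sE+A)^{-1}=I$ one obtains $A(sE+A)^{-1}=I-sE(sE+A)^{-1}$. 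Substituting the first into $E(sE+A)^{-1}A$ and the second into $A(sE+A)^{-1}E$ shows both products equal $E-sE(sE+A)^{-1}E$, which is (b); dividing the first identity by $s\neq 0$ yields (d) directly.

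For (c) I would induct on $k$, the key being to interchange $E$ and $A$ through the resolvent via (b). To verify $(sE+A)^{-1}Ex\in\mathrm{IV}_{k+1}$ one must check $A(sE+A)^{-1}Ex\in E[\mathrm{IV}_{k}]$; rewriting the left-hand side as $E(sE+A)^{-1}Ax$ by (b), it suffices to show $(sE+A)^{-1}Ax\in\mathrm{IV}_{k}$. The base case $k=0$ is then just $E(sE+A)^{-1}Ax\in R(E)=E[\mathrm{IV}_{0}]$. For the step, $x\in\mathrm{IV}_{k+1}$ gives $Ax=Ey$ with $y\in\mathrm{IV}_{k}$, so $(sE+A)^{-1}Ax=(sE+A)^{-1}Ey\in\mathrm{IV}_{k+1}$ by the inductive hypothesis, closing the loop.

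Part (e) is where I expect the real work, and I would again induct on $k$, now feeding the splitting (d) into itself. The base case $k=0$ is (d) with the choice $w=-x$. For the step, I would take $x\in\mathrm{IV}_{k+1}$, write $Ax=Ey$ with $y\in\mathrm{IV}_{k}$, and use (d) to reduce $(sE+A)^{-1}Ex$ to $\frac{1}{s}x-\frac{1}{s}(sE+A)^{-1}Ey$; inserting the inductive expansion of $(sE+A)^{-1}Ey$ and collecting terms produces the claimed formula at level $k+1$. The main obstacle here is purely the bookkeeping: one must shift every power $1/s^{\ell+1}$ up by one, absorb $y$ into the new $x_{1}$, relabel the old coefficients $y_{\ell}$ as $x_{\ell+1}$, and carry $-w$ into the final remainder term---all routine but easy to misindex. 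No growth estimate or normal-form theory is needed for any of the five parts; only the algebra of the two-sided inverse and the recursive definition of the $\mathrm{IV}$-spaces are used.
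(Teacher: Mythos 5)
Your proposal is correct and takes essentially the same route as the paper's proof: (a) by the same induction through preimages under $A$, (b) and (d) by the identical resolvent algebra, and (c) and (e) by induction on $k$, in each step writing $Ax=Ey$ with $y\in\mathrm{IV}_{k}$ and invoking (b), respectively (d), exactly as the paper does. The only differences are cosmetic, e.g.\ your (b) meets in the middle via the two one-sided inverse identities rather than the paper's single chain of equalities, and your reduction of (c) to showing $(sE+A)^{-1}Ax\in\mathrm{IV}_{k}$ is an equivalent reformulation of the paper's computation of $Ay$.
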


\begin{proof}
The proof of (a) is an induction argument. The claim is trivial for
$k=0$. For the inductive step, we see that the assertion follows
using the induction hypothesis by 
\[
\mathrm{IV}_{k+1}=A^{-1}[E[\mathrm{IV}_{k}]]\supseteq A^{-1}[E[\mathrm{IV}_{k+1}]]=\mathrm{IV}_{k+2}.
\]
Next, we prove (b). We compute
\begin{align*}
E(sE+A)^{-1}A= & E(sE+A)^{-1}(sE+A-sE)\\
= & E-E(sE+A)^{-1}sE\\
= & E-\left(sE+A-A\right)(sE+A)^{-1}E\\
= & A(sE+A)^{-1}E.
\end{align*}
We prove (c), by induction on $k$. For $k=0$, we let $x\in\mathrm{IV}_{0}=H$
and put $y\coloneqq\left(sE+A\right)^{-1}Ex.$ Then, by (b), we get
that
\[
Ay=A\left(sE+A\right)^{-1}Ex=E\left(sE+A\right)^{-1}Ax\in R(E)=E[\mathrm{IV}_{0}].
\]
Hence, $y\in\mathrm{IV}_{1}$. For the inductive step, we assume that
the assertion holds for some $k\in\mathbb{N}$. Let $x\in\mathrm{IV}_{k+1}$.
We need to show that $y\coloneqq\left(sE+A\right)^{-1}Ex\in\mathrm{IV}_{k+2}$.
For this, note that there exists $w\in\mathrm{IV}_{k}$ such that
$Ax=Ew$. In particular, by the induction hypothesis, we have $\left(sE+A\right)^{-1}Ew\in\mathrm{IV}_{k+1}$.
Then we compute using (b) again,
\begin{align*}
Ay & =A\left(sE+A\right)^{-1}Ex\\
 & =E\left(sE+A\right)^{-1}Ax\\
 & =E\left(sE+A\right)^{-1}Ew\in E[\mathrm{IV}_{k+1}].
\end{align*}
Hence, $y\in\mathrm{IV}_{k+2}$ and (c) is proved.\\
For (d), it suffices to observe
\begin{align*}
(sE+A)^{-1}E&=\frac{1}{s}(sE+A)^{-1}sE\\
&=\frac{1}{s}(sE+A)^{-1}(sE+A-A)\\
&=\frac{1}{s}-\frac{1}{s}(sE+A)^{-1}A.
\end{align*}
In order to prove part (e), we proceed by induction on $k\in\mathbb{N}$.
The case $k=0$ has been dealt with in part (d) by choosing $w=-x.$
For the inductive step, we let $x\in\mathrm{IV}_{k+1}$. By definition
of $\mathrm{IV}_{k+1}$, we find $y\in\mathrm{IV}_{k}$ such that
$Ax=Ey.$ By induction hypothesis, we find $w\in H$ and $x_{1},\ldots,x_{k}\in H$
such that 
\[
(sE+A)^{-1}Ey=\frac{1}{s}y+\sum_{\ell=1}^{k}\frac{1}{s^{\ell+1}}x_{\ell}+\frac{1}{s^{k+1}}(sE+A)^{-1}Aw.
\]
With this we compute using (d)
\begin{align*}
(sE+A)^{-1}Ex & =\frac{1}{s}x-\frac{1}{s}(sE+A)^{-1}Ax\\
 & =\frac{1}{s}x-\frac{1}{s}(sE+A)^{-1}Ey\\
 & =\frac{1}{s}x-\frac{1}{s}\left(\frac{1}{s}y+\sum_{\ell=1}^{k}\frac{1}{s^{\ell+1}}x_{\ell}+\frac{1}{s^{k+1}}(sE+A)^{-1}Aw\right)\\
 & =\frac{1}{s}x+\sum_{\ell=1}^{k+1}\frac{1}{s^{\ell+1}}\tilde{x}_{\ell}+\frac{1}{s^{k+2}}(sE+A)^{-1}A\tilde{w},
\end{align*}
with $\tilde{x}_{1}=-y,\tilde{x}_{\ell}=-x_{\ell-1}$ for $\ell\geq2$
and $\tilde{w}=-w$. 
\end{proof}
With \prettyref{lem:aux}(a), we obtain the following reformulation
of Proposition \ref{prop:stabInde}.
\begin{prop}
\label{prop:stabInd0}Assume that $E[\mathrm{IV}_{\ind(E,A)}]\subseteq H$
is closed. Then 
\[\mathrm{IV}_{\ind(E,A)+1}\subseteq\mathrm{IV}_{\ind(E,A)+2}.\]
\end{prop}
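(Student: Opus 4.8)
The plan is to fix $k=\ind(E,A)$, take an arbitrary $x\in\mathrm{IV}_{k+1}$, and produce $z\in\mathrm{IV}_{k+1}$ with $Ez=Ax$, which is exactly the assertion $x\in\mathrm{IV}_{k+2}$. Since $x\in\mathrm{IV}_{k+1}$, there is $y\in\mathrm{IV}_{k}$ with $Ax=Ey$, so it suffices to realise $Ey$ as $Ez$ for some $z\in\mathrm{IV}_{k+1}$. To this end I would construct approximants: for real $s=n>\nu$ set $v_{n}\coloneqq n(nE+A)^{-1}Ey$. By \prettyref{lem:aux}(c) each $v_{n}$ lies in $\mathrm{IV}_{k+1}$, and \prettyref{lem:aux}(e) together with the bound $\|(nE+A)^{-1}\|\le Cn^{k}$ shows that $(v_{n})_{n}$ is bounded in $H$, since the only a priori unbounded contribution in the expansion, namely $\tfrac{1}{n^{k}}(nE+A)^{-1}Aw$, is $O(1)$. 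A direct computation using $nE(nE+A)^{-1}=1-A(nE+A)^{-1}$ and \prettyref{lem:aux}(b) yields $Ev_{n}=Ey-E(nE+A)^{-1}Ay$.

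The analytic core is to show $E(nE+A)^{-1}Ay\to0$, i.e. $Ev_{n}\to Ey$. The key point is a refined resolvent estimate: combining \prettyref{lem:aux}(b) and (d) gives
\[
E(sE+A)^{-1}A=A(sE+A)^{-1}E=\tfrac1sA-\tfrac1sA(sE+A)^{-1}A,
\]
so that $\|E(sE+A)^{-1}A\|=O(|s|^{k-1})$, which is one power of $|s|$ better than the naive bound. For $k=0$ this already gives $E(nE+A)^{-1}Ay\to0$. For $k\ge1$ I would use the definition of $\mathrm{IV}_{k}$ to write $Ay=Ew_{1}$ with $w_{1}\in\mathrm{IV}_{k-1}$, expand $(nE+A)^{-1}Ew_{1}$ by \prettyref{lem:aux}(e), and apply the refined estimate to the remainder; every summand then carries a strictly negative power of $n$, so $E(nE+A)^{-1}Ay\to0$ and hence $Ev_{n}\to Ey$ strongly.

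Finally I would pass to a weak limit. As $(v_{n})_{n}$ is bounded, some subsequence satisfies $v_{n_{j}}\rightharpoonup z$ in $H$. Here the closedness hypothesis enters decisively: $\mathrm{IV}_{k+1}=A^{-1}\big[E[\mathrm{IV}_{k}]\big]$ is the preimage under the continuous operator $A$ of the closed subspace $E[\mathrm{IV}_{k}]$, hence is itself a closed subspace, hence weakly closed; therefore $z\in\mathrm{IV}_{k+1}$. Weak continuity of $E$ gives $Ev_{n_{j}}\rightharpoonup Ez$, while the preceding step gives $Ev_{n}\to Ey$; consequently $Ez=Ey=Ax$, so that $Ax\in E[\mathrm{IV}_{k+1}]$, i.e. $x\in\mathrm{IV}_{k+2}$.

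I expect the main obstacle to be the passage to the limit rather than the algebra. Strong convergence of $(v_{n})_{n}$ is not available, and only $E[\mathrm{IV}_{k}]$ (not $E[\mathrm{IV}_{k+1}]$) is assumed closed; the crucial realisation is that closedness of $E[\mathrm{IV}_{k}]$ is precisely what makes $\mathrm{IV}_{k+1}$ weakly closed, which lets the weak limit land in $\mathrm{IV}_{k+1}$. The other essential ingredient is the refined estimate gaining one power of $|s|$, without which $Ev_{n}\to Ey$ would fail.
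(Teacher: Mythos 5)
Your proposal is correct and follows essentially the same route as the paper's proof: the same approximants $n(nE+A)^{-1}Ey\in\mathrm{IV}_{k+1}$ via \prettyref{lem:aux}(c), boundedness via \prettyref{lem:aux}(e), a weakly convergent subsequence whose limit lies in $\mathrm{IV}_{k+1}=A^{-1}\bigl[E[\mathrm{IV}_{\ind(E,A)}]\bigr]$ (closed, hence weakly closed), and identification of the limit through $Ev_{n}\to Ey=Ax$. The only difference is cosmetic: where you kill the error term $E(nE+A)^{-1}Ay$ by the refined bound $\|E(sE+A)^{-1}A\|=O(|s|^{k-1})$ together with a re-expansion at level $k-1$, the paper instead rewrites it as $A(nE+A)^{-1}Ey$ via \prettyref{lem:aux}(b) and applies the expansion (e) at level $k$, whose remainder is $O(1/n)$ by the resolvent estimate, so the refined estimate---contrary to your closing remark---is convenient but not indispensable.
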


\begin{proof}
Note that the closedness of $E[\mathrm{IV}_{\ind(E,A)}]$ implies
the same for the space $\mathrm{IV}_{\ind(E,A)+1}$ since $A$ is continuous.
We set $k\coloneqq\ind(E,A)$. Let $x\in\mathrm{IV}_{k+1}$. Then
we need to find $y\in\mathrm{IV}_{k+1}$ with $Ax=Ey$. By definition
there exists $x_{0}\in\mathrm{IV}_{k}$ with the property $Ax=Ex_{0}$.
For $n\in\mathbb{N}$ large enough we define $y_{n}\coloneqq n\left(nE+A\right)^{-1}Ex_{0}.$
Since, $x_{0}\in\mathrm{IV}_{k}$, we deduce with \prettyref{lem:aux}(c)
that $y_{n}\in\mathrm{IV}_{k+1}$. Moreover, by \prettyref{lem:aux}(e),
$(y_{n})_{n}$ is bounded. Choosing a suitable subsequence for which
we use the same name, we may assume that $(y_{n})_{n}$ is weakly
convergent to some $y\in H$. The closedness of $\mathrm{IV}_{k+1}$
implies $y\in\mathrm{IV}_{k+1}$. Then using \prettyref{lem:aux}(e)
we find $w\in H$ and $x_{1},\ldots,x_{k+1}\in H$ such that 
\[
\left(nE+A\right)^{-1}Ex_{0}=\sum_{\ell=0}^{k}\frac{1}{n^{\ell+1}}x_{\ell}+\frac{1}{n^{k+1}}(nE+A)^{-1}Aw.
\]
Hence, we obtain
\begin{align*}
Ey & =\wlim_{n\to\infty}Ey_{n}\\
 & =\wlim_{n\to\infty}E\left(nE+A\right)^{-1}nEx_{0}\\
 & =\wlim_{n\to\infty}nE\left(nE+A\right)^{-1}Ax\\
 & =\wlim_{n\to\infty}\left(nE+A-A\right)\left(nE+A\right)^{-1}Ax\\
 & =Ax-\wlim_{n\to\infty}A\left(nE+A\right)^{-1}Ex_{0}\\
 & =Ax-A\wlim_{n\to\infty}\left(\sum_{\ell=0}^{k}\frac{1}{n^{\ell+1}}x_{\ell}+\frac{1}{n^{k+1}}(nE+A)^{-1}Aw\right)=Ax,
\end{align*}
which yields the assertion.
\end{proof}
With an idea similar to the one in the proof of Proposition \ref{prop:stabInde}
(Proposition \ref{prop:stabInd0}), it is possible to show that $E\colon\mathrm{IV}_{k+1}\to E[\mathrm{IV}_{k}]$
is an isomorphism if $k=\ind(E,A)$ and $E[\mathrm{IV}_{k}]\subseteq H$
is closed. We will need this result also in the proof of our main
theorem.
\begin{thm}
\label{thm:Eiso}Let $(E,A)$ be regular and assume that $E[\mathrm{IV}_{k}]\subseteq H$
is closed, $k=\ind(E,A).$ Then
\[
E\colon\mathrm{IV}_{k+1}\to E[\mathrm{IV}_{k}],x\mapsto Ex
\]
is a Banach space isomorphism.
\end{thm}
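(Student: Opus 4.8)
The plan is to show that the stated map is a bounded linear bijection between two Banach spaces and then to conclude with the open mapping (bounded inverse) theorem. First I would record that both spaces are genuine Banach spaces: $E[\mathrm{IV}_k]$ is closed by hypothesis, while $\mathrm{IV}_{k+1}=A^{-1}[E[\mathrm{IV}_k]]$ is closed as the preimage of a closed set under the continuous operator $A$. Being closed subspaces of the Hilbert space $H$, they inherit a Hilbert space structure. Since $\mathrm{IV}_{k+1}\subseteq\mathrm{IV}_k$ by \prettyref{lem:aux}(a), the assignment $x\mapsto Ex$ maps $\mathrm{IV}_{k+1}$ into $E[\mathrm{IV}_k]$, and it is bounded as a restriction of $E\in L(H)$. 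It therefore remains to establish injectivity and surjectivity; continuity of the inverse then follows automatically.

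For injectivity, I would take $x\in\mathrm{IV}_{k+1}$ with $Ex=0$ and use the resolvent. For $s\in\mathbb{C}\cap\rho(E,A)$ one has $(sE+A)x=Ax$, hence $x=(sE+A)^{-1}Ax$. Choosing $x_0\in\mathrm{IV}_k$ with $Ax=Ex_0$ and applying \prettyref{lem:aux}(e) gives $w,x_1,\dots,x_k\in H$ with
\[
x=(sE+A)^{-1}Ex_0=\frac1s x_0+\sum_{\ell=1}^k\frac{1}{s^{\ell+1}}x_\ell+\frac{1}{s^{k+1}}(sE+A)^{-1}Aw.
\]
Letting $s\to\infty$ along the reals and invoking $\|(sE+A)^{-1}\|\le C|s|^k$, every term on the right tends to $0$ in norm (the last one is of order $1/s$), while the left-hand side is independent of $s$; thus $x=0$.

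For surjectivity I would reuse the weak-compactness idea from the proof of \prettyref{prop:stabInd0}. Given $v\in E[\mathrm{IV}_k]$, write $v=Ey$ with $y\in\mathrm{IV}_k$ and set $z_n\coloneqq n(nE+A)^{-1}Ey$ for large $n$. By \prettyref{lem:aux}(c) each $z_n$ lies in $\mathrm{IV}_{k+1}$, and \prettyref{lem:aux}(e) shows $(z_n)_n$ to be bounded, so after passing to a subsequence $z_n\rightharpoonup z$ with $z\in\mathrm{IV}_{k+1}$, the latter space being weakly closed. From $nE(nE+A)^{-1}=1-A(nE+A)^{-1}$ I obtain $Ez_n=Ey-A(nE+A)^{-1}Ey$, and \prettyref{lem:aux}(e) once more (using $y\in\mathrm{IV}_k$) forces $(nE+A)^{-1}Ey\to0$ in norm, whence $Ez_n\to Ey$. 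Comparing with $Ez_n\rightharpoonup Ez$ gives $Ez=v$, which is the desired surjectivity.

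I expect the main obstacle to be the same in both halves: the crude estimate $\|(sE+A)^{-1}\|\le C|s|^k$ only controls $(sE+A)^{-1}Ey$ up to order $|s|^{k-1}$, which diverges once $k\ge2$. The decisive point is that the membership $y\in\mathrm{IV}_k$ is exactly what upgrades this to decay of order $1/s$, via the telescoping expansion in \prettyref{lem:aux}(e). Organizing this cancellation, together with the correct identification of the weak limit in the surjectivity step, is the crux of the argument; everything else is soft functional analysis.
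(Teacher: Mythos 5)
Your proposal is correct and follows essentially the same route as the paper's proof: injectivity via the expansion of \prettyref{lem:aux}(e) applied to $x=(sE+A)^{-1}Ex_0$ and letting $s\to\infty$, and surjectivity via the bounded sequence $n(nE+A)^{-1}Ey\in\mathrm{IV}_{k+1}$ (bounded by \prettyref{lem:aux}(e), membership by \prettyref{lem:aux}(c)), a weakly convergent subsequence in the weakly closed space $\mathrm{IV}_{k+1}$, and identification of the weak limit. The only cosmetic deviations are your use of the bounded inverse theorem in place of the paper's closed graph theorem, and the identity $nE(nE+A)^{-1}=1-A(nE+A)^{-1}$ together with $(nE+A)^{-1}Ey\to0$ in place of the paper's direct computation of $\wlim_{n\to\infty}\frac{1}{n}(nE+A)w_n$.
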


\begin{proof}
Note that by the closed graph theorem, it suffices to show that the
operator under consideration is one-to-one and onto. So, for proving
injectivity, we let $x\in\mathrm{IV}_{k+1}$ such that $Ex=0.$ By
definition, there exists $y\in\mathrm{IV}_{k}$ such that $Ey=Ax=Ax+nEx$
for all $n\in\mathbb{N}$. Hence, for $n\in\mathbb{N}$ large enough,
we have $x=\left(nE+A\right)^{-1}Ey$. Thus, from $y\in\mathrm{IV}_{k}$
we deduce with the help of \prettyref{lem:aux}(e) that there exist
$w,x_1,\ldots.x_{k}\in H$ such that
\[
x=\left(nE+A\right)^{-1}Ey=\frac{1}{n}y+\sum_{\ell=1}^{k}\frac{1}{n^{\ell+1}}x_{\ell}+\frac{1}{n^{k+1}}(nE+A)^{-1}Aw\to0\quad(n\to\infty),
\]
which shows $x=0$. 

Next, let $y\in E[\mathrm{IV}_{k}]$. For large enough $n\in\mathbb{N}$
we put 
\[
w_{n}\coloneqq(nE+A)^{-1}ny.
\]
By \prettyref{lem:aux}(c), we obtain that $w_{n}\in\mathrm{IV}_{k+1}.$
Let $x\in\mathrm{IV}_{k}$ with $Ex=y$. Then, using \prettyref{lem:aux}(e),
we find $w,x_{1},\ldots,x_{k}\in H$ such that
\begin{align*}
w_{n} & =(nE+A)^{-1}ny\\
 & =(nE+A)^{-1}nEx\\
 & =x+\sum_{\ell=1}^{k}\frac{1}{n^{\ell}}x_{\ell}+\frac{1}{n^{k}}(nE+A)^{-1}Aw,
\end{align*}
proving the boundedness of $(w_{n})_{n}.$ Without loss of generality,
we may assume that $(w_{n})_{n}$ weakly converges to $z\in\mathrm{IV}_{k+1}=A^{-1}[E[\mathrm{IV}_{k}]]$.
Hence, 
\begin{align*}
Ez&=\wlim_{n\to\infty}Ew_{n}\\
&=\wlim_{n\to\infty}\frac{1}{n}\left(nE+A\right)w_{n}\\
&=\wlim_{n\to\infty}\frac{1}{n}\left(nE+A\right)(nE+A)^{-1}ny\\
&=y\tag*{{\qedhere}}.
\end{align*}
\end{proof}
Next, we come to the proof of our main result \prettyref{thm:mainTh}, which we restate here for convenience.
\begin{thm}
\label{thm:mainTh-1}Assume that $E[\mathrm{IV}_{\ind(E,A)}]\subseteq H$
is closed, $u_{0}\in\mathrm{IV}_{\ind(E,A)+1}$. Then \prettyref{eq:IVPdis}
has a unique continuously differentiable solution $u:\mathbb{R}_{>0}\to H$,
satisfying $u(0+)=u_{0}$ and 
\begin{equation}
Eu'(t)+Au(t)=0\quad(t>0).\label{eq:dAe}
\end{equation}
Moreover, the solution coincides with the solution given in \prettyref{thm:solthdis}.
\end{thm}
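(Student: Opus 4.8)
The plan is to reduce the differential-algebraic equation to a genuine ordinary differential equation with \emph{bounded} generator on the closed subspace $\mathrm{IV}_{k+1}$, where $k=\ind(E,A)$, and then to match the resulting classical solution with the distributional one from \prettyref{thm:solthdis}. First I would record the structure supplied by \prettyref{thm:Eiso}: since $E[\mathrm{IV}_k]$ is closed, the space $\mathrm{IV}_{k+1}=A^{-1}[E[\mathrm{IV}_k]]$ is a closed subspace of $H$ (hence a Hilbert space in its own right, as already noted in the proof of Proposition~\ref{prop:stabInde}), the operator $A$ maps $\mathrm{IV}_{k+1}$ into $E[\mathrm{IV}_k]$ by the very definition of $\mathrm{IV}_{k+1}$, and $E\colon\mathrm{IV}_{k+1}\to E[\mathrm{IV}_k]$ is a Banach space isomorphism. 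Consequently $B\coloneqq-(E|_{\mathrm{IV}_{k+1}})^{-1}A|_{\mathrm{IV}_{k+1}}$ is a well-defined bounded operator on $\mathrm{IV}_{k+1}$ satisfying $EBx=-Ax$ for every $x\in\mathrm{IV}_{k+1}$.

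For existence I would simply set $u(t)\coloneqq\e^{tB}u_0$ for $u_0\in\mathrm{IV}_{k+1}$. As $B\in L(\mathrm{IV}_{k+1})$, this defines a real-analytic, in particular continuously differentiable, curve $\mathbb{R}\to\mathrm{IV}_{k+1}\hookrightarrow H$ with $u(0)=u_0$ and $u'=Bu$. Applying $E$ and using $EB=-A$ on $\mathrm{IV}_{k+1}$ gives $Eu'(t)=EBu(t)=-Au(t)$, which is \prettyref{eq:dAe}, and $u(0+)=u_0$ is clear. Moreover $\|u(t)\|_H\le\e^{\|B\|t}\|u_0\|_H$, so $u$ is exponentially bounded.

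Next I would identify this $u$ with the distributional solution. Extending by zero, $\tilde u\coloneqq\chi_{\mathbb{R}_{\geq0}}u$ lies in $L_{2,\nu}(\mathbb{R};H)\hookrightarrow H_\nu^{-k}(\mathbb{R};H)$ once $\nu>\|B\|$ is chosen large enough for Corollary~\ref{cor:solOp}. Since $t\mapsto Eu(t)$ is continuously differentiable with $Eu(0+)=Eu_0$, the distributional derivative of $E\tilde u$ picks up exactly one jump at the origin, so that $\partial_{0,\nu}(E\tilde u)=\delta\cdot Eu_0+\chi_{\mathbb{R}_{\geq0}}Eu'=\delta\cdot Eu_0-A\tilde u$, which rearranges to $(\partial_{0,\nu}E+A)\tilde u=\delta\cdot Eu_0$, i.e.\ \prettyref{eq:IVPdis}. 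Hence $\tilde u$ solves \prettyref{eq:IVPdis} in $H_\nu^{-k}(\mathbb{R};H)$, and by the uniqueness part of \prettyref{thm:solthdis} it agrees with the solution constructed there; this is the coincidence claim.

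Finally, for uniqueness among continuously differentiable solutions, I would argue that any such $v$ with $v(0+)=u_0$ solves the same reduced ODE. The guiding idea is to show $v(t)\in\mathrm{IV}_{k+1}$ for all $t>0$: one starts from $Av(t)=-Ev'(t)\in R(E)$, giving $v(t)\in\mathrm{IV}_1$, and bootstraps, using that the difference quotients of a curve lying in a closed subspace have their limit in that subspace, so that $v'(t)$ inherits the membership and $Av(t)=-Ev'(t)\in E[\mathrm{IV}_j]$ pushes $v(t)$ one step up the scale; once $v(t),v'(t)\in\mathrm{IV}_{k+1}$, injectivity of $E$ there forces $v'=Bv$ and classical ODE uniqueness yields $v=\e^{tB}u_0=u$. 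The main obstacle is precisely this climb $\mathrm{IV}_1\supseteq\cdots\supseteq\mathrm{IV}_{k+1}$: the step $v'(t)\in\mathrm{IV}_j$ needs closedness of the \emph{intermediate} space $\mathrm{IV}_j$, which the hypothesis does not grant (only $E[\mathrm{IV}_k]$, hence $\mathrm{IV}_{k+1}$, is assumed closed). I would circumvent this by transferring uniqueness to the distributional level: any exponentially bounded $C^1$ solution $v$ yields, exactly as above, a solution $\chi_{\mathbb{R}_{\geq0}}v\in H_\nu^{-k}(\mathbb{R};H)$ of \prettyref{eq:IVPdis}, whence $\chi_{\mathbb{R}_{\geq0}}v=\tilde u$ by \prettyref{thm:solthdis} and so $v=u$; the only residual point is the admissible growth of $v$, which is exactly what the reduction to $v'=Bv$ secures.
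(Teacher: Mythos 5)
Your proposal is correct and follows the paper's proof essentially verbatim: existence via the exponential of the bounded operator $-\tilde{E}^{-1}A$ on the closed space $\mathrm{IV}_{k+1}$ furnished by Theorem \ref{thm:Eiso}, the same jump computation $(\partial_{0,\nu}E+A)\chi_{\mathbb{R}_{\geq0}}u=\delta\cdot Eu_{0}$ identifying the classical with the distributional solution, and uniqueness by transferring any competitor to \prettyref{eq:IVPdis} and invoking \prettyref{thm:solthdis} rather than bootstrapping through the $\mathrm{IV}_{j}$. The one blemish is your closing clause: claiming that the growth of an \emph{arbitrary} competitor $v$ is ``secured by the reduction to $v'=Bv$'' is circular, since that reduction was established only for the solution you constructed — but the paper makes exactly the same leap (it asserts without argument that any $C^{1}$ solution lies in $L_{2,\nu}(\mathbb{R};H)$ for $\nu$ large), and both arguments can be repaired by truncating $v$ after time $T$ and using the causality of $\partial_{0,\nu}^{-k}(\partial_{0,\nu}E+A)^{-1}$ from Corollary \ref{cor:solOp} to conclude $v=u$ on $(0,T)$ for every $T>0$.
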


\begin{proof}
Let $u_{0}\in\mathrm{IV}_{\ind(E,A)+1}$. We denote $\tilde{E}\colon\mathrm{IV}_{k+1}\to E[\mathrm{IV}_{k}],x\mapsto Ex,$
where $k=\ind(E,A)$. By \prettyref{thm:Eiso}, we have that $\tilde{E}$
is an isomorphism. For $t>0$, we define 
\[
u(t)\coloneqq\exp\left(-t\tilde{E}^{-1}A\right)u_{0}.
\]
Then $u(0+)=u_{0}.$ Moreover, $u(t)$ is well-defined. Indeed, if
$u_{0}\in\mathrm{IV}_{k+1}$ then $Au_{0}\in E[\mathrm{IV}_{k}]$.
Hence, $\tilde{E}^{-1}Au_{0}\in\mathrm{IV}_{k+1}$ is well-defined.
Since $E[\mathrm{IV}_{k}]$ is closed, and $A$ is continuous, we
infer that $\mathrm{IV}_{k+1}$ is a Hilbert space. Thus, we deduce
that $u\colon\mathbb{R}_{>0}\to\mathrm{IV}_{k+1}$ is continuously
differentiable. In particular, we obtain
\[
\mathrm{IV}_{k+1}\ni u'(t)=-\tilde{E}^{-1}Au(t).
\]
If we apply $\tilde{E}$ to both sides of the equality, we obtain
\prettyref{eq:dAe}. If $u:\mathbb{R}_{>0}\to H$ is a continuously
differentiable solution of \prettyref{eq:dAe} with $u(0+)=u_{0},$
we infer that $u\in L_{2,\nu}(\mathbb{R};H)$ for some $\nu>0$
large enough, where we extend $u$ to $\mathbb{R}_{<0}$ by zero.
Hence, 
\[
\partial_{0,\nu}Eu+Au=E\partial_{0,\nu}u+Au=Eu'+Au+\delta\cdot Eu(0+)=\delta\cdot Eu_{0},
\]
where we have used that $u$ is differentiable on $\mathbb{R}_{<0}\cup\mathbb{R}_{>0}$
and jumps at $0$. Thus, $u$ is the solution given in \prettyref{thm:solthdis},
from which we also derive the uniqueness. 
\end{proof}

We conclude with a comment on the proof of Corollary \ref{cor:mainCor}.
\begin{rem}
We note that the condition $u_{0}\in\mathrm{IV}_{\ind(E,A)+1}$ arises
naturally if we assume that $\mathrm{IV}_{j}$ is closed for each
$j\in\mathbb{N}.$ Indeed, if $u:\mathbb{R}_{>0}\to H$ is a continuously
differentiable solution of \prettyref{eq:dAe}, we infer that 
\[
Au(t)=-Eu'(t)\quad(t>0)
\]
and thus $u(t)\in\mathrm{IV}_{1}$ for $t>0$. Since $\mathrm{IV}_{1}$
is closed, we derive $u'(t)\in\mathrm{IV}_{1}$ and hence, inductively
$u(t)\in\bigcap_{j\in\mathbb{N}}\mathrm{IV}_{j}$ for each $t>0.$
Since $\bigcap_{j\in\mathbb{N}}\mathrm{IV}_{j}=\mathrm{IV}_{\ind(E,A)+1}$
by Proposition \ref{prop:stabInd0}, we get 
\[
u_{0}=u(0+)\in\mathrm{IV}_{\ind(E,A)+1}.
\]
\end{rem}


\begin{thebibliography}{1}

\bibitem{Berger2012}
T.~Berger, A.~Ilchmann, and S.~Trenn.
\newblock The quasi-{W}eierstra\ss \ form for regular matrix pencils.
\newblock {\em Linear Algebra Appl.}, 436(10):4052--4069, 2012.

\bibitem{Dai1989}
L.~Dai.
\newblock {\em Singular Control Systems}.
\newblock Springer-Verlag New York, Inc., Secaucus, NJ, USA, 1989.

\bibitem{KPSTW14_OD}
A.~Kalauch, R.~Picard, S.~Siegmund, S.~Trostorff, and M.~M.~Waurick.
\newblock {A Hilbert space perspective on ordinary differential equations with
  memory term}.
\newblock {\em {Journal of Dynamics and Differential Equations}},
  26(2):369--399, 2014.

\bibitem{Mehrmann2006}
P.~{Kunkel} and V.~{Mehrmann}.
\newblock {\em {Differential-Algebraic Equations. Analysis and Numerical
  Solution.}}
\newblock European Mathematical Society Publishing House, Z\"urich, 2006.

\bibitem{Picard1989}
R.~Picard.
\newblock {\em Hilbert Space Approach to Some Classical Transforms.} Volume 196
  of Pitman Research Notes in Mathematics Series.
\newblock Longman Scientific \& Technical, Harlow; copublished in the United
  States with John Wiley \& Sons, Inc., New York, 1989.

\bibitem{PicPhy}
R.~Picard.
\newblock {A structural observation for linear material laws in classical
  mathematical physics}.
\newblock {\em {Mathematical Methods in the Applied Sciences}}, 32:1768--1803,
  2009.

\bibitem{rudin1987real}
W.~Rudin.
\newblock {\em {Real and Complex Analysis}}.
\newblock Mathematics series. McGraw-Hill, 1987.

\bibitem{Trostorff2017b}
S.~Trostorff and M.~Waurick.
\newblock {On differential-algebraic equations in infinite dimensions}.
\newblock Technical report, TU Dresden, University of Strathclyde, 2017.

\end{thebibliography}
\end{document}